\newcommand{\tpmod}[1]{{\@displayfalse\pmod{#1}}}
\newtheorem{thm}{Theorem}[section]
\newtheorem{lemma}[thm]{Lemma}
\theoremstyle{remark}
\theoremstyle{definition}
\newtheorem{rem}[thm]{Remark}
\theoremstyle{THM}
\newcommand{\abs}[1]{\left|{#1}\right|}
\def\FF {{\mathcal F}}
\def\GG {{\mathcal G}}
\def\Z {{\mathbb Z}}
\def\Q {{\mathbb Q}}
\def\GG {{\mathcal G}}
\def\Z {{\mathbb Z}}
\def\Q {{\mathbb Q}}
\def\Gal{{\mbox{{\rm{Gal}}}}}
\def\red#1 {\textcolor{red}{#1 }}
\def\blue#1 {\textcolor{blue}{#1 }}
\numberwithin{equation}{section}
\def\Z {{\mathbb Z}}
\newcommand{\mmod}[1]{\ \mathrm{mod}\enspace #1}
\begin{document}

\title[Monogenic Cyclic Trinomials of the Form $x^4+cx+d$]{Monogenic Cyclic Trinomials of the Form $x^4+cx+d$}

\author{Lenny Jones}
\address{Professor Emeritus, Department of Mathematics, Shippensburg University, Shippensburg, Pennsylvania 17257, USA}
\email[Lenny~Jones]{doctorlennyjones@gmail.com}

\date{\today}

\begin{abstract}
 A monic polynomial $f(x)\in \Z[x]$ of degree $n$ that is irreducible over $\Q$ is called \emph{cyclic} if the Galois group over $\Q$ of $f(x)$ is the cyclic group of order $n$, while $f(x)$ is called \emph{monogenic} if
 $\{1,\theta,\theta^2,\ldots, \theta^{n-1}\}$ is a basis for the ring of integers of $\Q(\theta)$, where $f(\theta)=0$. 
 In this article, we show that there do not exist any monogenic cyclic trinomials of the form $f(x)=x^4+cx+d$. 
This result, combined with previous work, proves that the only monogenic cyclic quartic trinomials are $x^4-4x^2+2$, $x^4+4x^2+2$ and $x^4-5x^2+5$.
\end{abstract}

\subjclass[2020]{Primary 11R16, 11R32}
\keywords{monogenic, quartic, trinomial, Galois}

\maketitle
\section{Introduction}\label{Section:Intro}

  We say that a monic polynomial $f(x)\in \Z[x]$ is \emph{monogenic} if $f(x)$ is irreducible over $\Q$ and $\{1,\theta,\theta^2,\ldots ,\theta^{\deg(f)-1}\}$ is a basis for the ring of integers $\Z_K$ of $K=\Q(\theta)$, where $f(\theta)=0$. Hence, $f(x)$ is monogenic if and only if  $\Z_K=\Z[\theta]$. For the minimal polynomial $f(x)$ of an algebraic integer $\theta$ over $\Q$, it is well known \cite{Cohen} that
\begin{equation} \label{Eq:Dis-Dis}
\Delta(f)=\left[\Z_K:\Z[\theta]\right]^2\Delta(K),
\end{equation}
where $\Delta(K)$ is the discriminant over $\Q$ of the number field $K$.
Thus, from \eqref{Eq:Dis-Dis}, $f(x)$ is monogenic if and only if $\Delta(f)=\Delta(K)$.

In a private communication, Tristan Phillips asked the author if it is possible to determine all monogenic cyclic quartic trinomials; that is, monogenic quartic trinomials that have Galois group isomorphic to the cyclic group of order 4. Recently, a partial answer to the question of Phillips was given in two separate papers \cite{JonesBAMSEven} and \cite{HJF1}. In \cite{JonesBAMSEven}, the author showed that the only monogenic cyclic trinomials of the form $x^4+bx^2+d\in \Z[x]$ are
\begin{equation}\label{Eq:Complete}
x^4-4x^2+2,\quad x^4+4x^2+2\quad \mbox{and} \quad x^4-5x^2+5,
\end{equation} while in \cite{HJF1}, Joshua Harrington and the author proved that there do not exist any monogenic cyclic trinomials of the form $x^4+ax^3+d\in \Z[x]$. In this article, we show that the complete solution to Phillips' question is precisely the set of trinomials in \eqref{Eq:Complete} by establishing the following result.
\begin{thm}\label{Thm:Main}
  There do not exist any monogenic cyclic quartic trinomials of the form $x^4+cx+d\in \Z[x]$.
\end{thm}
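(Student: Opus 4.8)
The plan is to combine the Galois-theoretic constraint (cyclic quartic) with the monogenicity constraint ($\Delta(f) = \Delta(K)$) and derive a contradiction. For a quartic trinomial $f(x) = x^4 + cx + d$, the discriminant is the classical
$$\Delta(f) = -27c^4 + 256 d^3,$$
and the resolvent cubic is $g(x) = x^3 - 4dx - c^2$. The first step is to recall the standard criterion for $\Gal(f) \cong \Z/4\Z$: the resolvent cubic must have exactly one rational root, say $r$, and then a secondary condition on whether certain associated quantities are squares determines $C_4$ versus $V_4$. So I would begin by writing down precisely, in terms of $c$ and $d$, the conditions under which $f$ is irreducible with cyclic quartic Galois group. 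This typically forces $\Delta(f)$ to be a square (since $A_4$ or $C_4$ or $V_4$ all sit in $A_4$... actually $C_4 \not\subset A_4$), so more care is needed — the correct statement is that $f$ is $C_4$ or $V_4$ iff the resolvent cubic splits completely or has a unique rational root with the right square conditions; I'd pin this down using the known classification (e.g.\ Kappe–Warren).

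The second step is to exploit monogenicity prime by prime. Using $\Delta(f) = -27c^4 + 256d^3$ and the factorization structure, I would analyze for each prime $p \mid \Delta(f)$ whether $p^2 \mid [\Z_K : \Z[\theta]]$ is forced. The key tool is Dedekind's criterion applied to $f(x) \bmod p$: monogenicity requires that for every prime $p$ with $p^2 \mid \Delta(f)$, Dedekind's test passes. For trinomials there are clean closed-form versions of this (the author has developed these in prior work): one gets explicit conditions like "$p \nmid c$ or $p^3 \nmid (\text{something})$" that must hold. I would reduce, via elementary manipulations with $\gcd(c,d)$ and the shape $-27c^4 + 256 d^3$, to a small number of cases controlled by the primes $2$ and $3$ (which always divide the discriminant in a controlled way) together with the radical of the "square part" of $\Delta(f)$.

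The decisive step — and the one I expect to be the main obstacle — is showing that the cyclic (rather than merely irreducible) hypothesis is incompatible with what monogenicity allows. Cyclicity of a quartic field $K$ forces strong local conditions: $K$ has a unique quadratic subfield $F$, $K/F$ is ramified in a constrained way, and the conductor-discriminant formula gives $\Delta(K) = \Delta(F)^2 \cdot \mathfrak{f}(\chi)\mathfrak{f}(\bar\chi)$ for the quartic character $\chi$. In particular $\Delta(K)$ must be a perfect square (as $C_4$ fields are totally real or CM with square discriminant), so monogenicity forces $\Delta(f) = -27c^4 + 256d^3$ to be a perfect square. The crux is then to show the Diophantine equation $-27c^4 + 256 d^3 = \square$, intersected with the arithmetic constraints coming from "$C_4$" and from Dedekind's criterion at $2$ and $3$, has no solutions giving a genuinely monogenic cyclic field. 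I would attack this by: (i) using the resolvent-cubic rational root $r$ to parametrize, writing $c^2 = r^3 - 4dr$ hence $c^2 = r(r^2 - 4d)$, forcing divisibility relations; (ii) feeding these into the square condition; (iii) running Dedekind at $p = 2, 3$ to kill the residual families. The anticipated difficulty is that the square condition alone has infinitely many solutions, so the cyclic-specific square/character conditions must be leveraged precisely to cut down to finitely many candidates, each then eliminated by a direct monogenicity check — mirroring how the $x^4 + bx^2 + d$ case in \cite{JonesBAMSEven} left exactly three survivors, whereas here I expect the extra rigidity of the linear-term trinomial to leave none.
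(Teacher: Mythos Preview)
Your outline starts in the right place (Kappe--Warren, the resolvent root $r$ with $c^2=r(r^2-4d)$, Dedekind/JKS at small primes), but it contains a genuine error that would derail the argument: you assert that a cyclic quartic field has square discriminant, hence that monogenicity forces $-27c^4+256d^3$ to be a perfect square. This is false. A generator of $C_4$ acting on the roots is a $4$-cycle, an odd permutation, so $C_4\not\subset A_4$ and $\Delta(f)$ is \emph{never} a square when $\Gal(f)\simeq C_4$; correspondingly, conductor--discriminant gives $\Delta(K)=f_\chi^2 f_{\chi^2}$, which need not be a square. The paper's replacement for your square condition is the precise Kappe--Warren refinement (Lemma~\ref{Lem:KW}): with $t$ the integer root of the resolvent, $C_4$ holds iff $\delta_1=t(16d-3t^2)$ and $\delta_2=(t^2-4d)(16d-3t^2)$ are both nonzero squares, while $\Delta(f)=(16d-3t^2)(3t^2-4d)^2$ with $16d-3t^2$ a nonsquare. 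These are the square constraints you actually need, and they are what drive everything downstream.

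Beyond that, your sketch omits the two mechanisms that make the paper's proof go through. First, the case where $3t^2-4d$ is a power of $2$ cannot be dismissed by Dedekind at $2$ alone; the paper reduces it to finding integral points on the elliptic curves $Y^2=X^3-2^kX$ and invokes the classification of such points \cite{Dra,Walsh} to eliminate it. Second, for primes $q\ge 5$ the paper proves, using condition~\eqref{JKS:I4} of Theorem~\ref{Thm:JKS} together with the squareness of $\delta_1$, that $q\mid(3t^2-4d)\iff q\mid(16d-3t^2)$ and that each such $q$ divides both to the first power only; this forces $3t^2-4d$ and $16d-3t^2$ to share the same squarefree part away from $2$ and $3$, after which a $2$-adic/$3$-adic analysis shows the $3$-adic valuations must differ, and then JKS at $q=3$ kills monogenicity. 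None of this structure is visible in ``run Dedekind at $2$ and $3$ and hope the residual families die''; the interaction between the $\delta_i$-square conditions and JKS condition~\eqref{JKS:I4} at primes $\ge 5$ is the missing idea.
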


\section{Preliminaries}\label{Section:Prelim}
Throughout this article, we let
\begin{equation}\label{Eq:fandr}
f(x):=x^4+cx+d \quad \mbox{and}\quad r(x):=x^3-4dx-c^2,
\end{equation}
where $c,d\in \Z$ with $cd\ne 0$. 
 Straightforward calculations in Maple reveal that
\begin{equation}\label{Eq:Deltafandr}
\Delta(f)=\Delta(r)=256d^3-27c^4.
\end{equation}
\begin{rem}
The polynomial $r(x)$ in \eqref{Eq:fandr} is known as \emph{the cubic resolvent of $f(x)$}.
\end{rem}

The first theorem in this section follows from a result due to Jakhar, Khanduja and Sangwan \cite[Theorem 1.1]{JKS2} for arbitrary irreducible trinomials when applied to our specific quartic trinomial $f(x)$. Note that, for a prime $q$, we use 
the notation $q^N\mid \mid M$ to mean that $q^N$ is the exact power of $q$ that divides the integer $M$.
\begin{thm}\label{Thm:JKS}
Let $f(x)$ be irreducible over $\Q$. Let $K=\Q(\theta)$, where $f(\theta)=0$, and let $\Z_K$ be the ring of integers of $K$. A prime factor $q$ of $\Delta(f)$ does not divide $\left[\Z_K:\Z[\theta]\right]$ if and only if $q$ satisfies one of the following conditions:
\begin{enumerate}[font=\normalfont]
  \item \label{JKS:I1} when $q\mid c$ and $q\mid d$, then $q^2\nmid d$;\\
  \item \label{JKS:I2} when $q\mid c$ and $q\nmid d$, then
  \[\mbox{either} \quad q\mid c_2 \mbox{ and } q\nmid d_1 \quad \mbox{ or } \quad q\nmid c_2\left(dc_2^{4}+d_1^{4}\right),\]
  where $c_2=c/q$, and $d_1=\frac{d+(-d)^{q^j}}{q}$ with $q^j\mid\mid 4$;\\
  \item \label{JKS:I3} when $q\nmid c$ and $q\mid d$, then
  \[\mbox{either} \quad q\mid c_1 \mbox{ and } q\nmid d_2 \quad \mbox{ or } \quad q\nmid c_1\left(cc_1^3-d_2^3\right),\]
  where $c_1=\frac{c+(-c)^{q^\ell}}{q}$ with $q^\ell\mid\mid 3$, and $d_2=d/q$;\\
  \item \label{JKS:I4} when $q\nmid cd$, then $q^2\nmid \left(256d^3-27c^4\right)$.
   \end{enumerate}
\end{thm}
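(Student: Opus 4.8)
The plan is to obtain this statement as a direct specialization of the general index criterion of Jakhar, Khanduja and Sangwan \cite[Theorem 1.1]{JKS2}, which characterizes, for an arbitrary irreducible trinomial $x^n+ax^m+b\in\Z[x]$ and a prime $q\mid\Delta$, exactly when $q\nmid[\Z_K:\Z[\theta]]$. First I would set $(n,m,a,b)=(4,1,c,d)$ and record the basic invariants: since $\gcd(m,n)=\gcd(1,4)=1$, the general parameters satisfy $n_1=n=4$ and $m_1=m=1$, so every ``$\gcd(m,n)$-th power'' condition appearing in the general theorem becomes vacuous. The general theorem branches into four cases according to the divisibility of $q$ into $a$ and $b$, and these branches correspond to the four cases (1)--(4) of the statement; thus the proof reduces to transcribing each branch under the above substitution and simplifying the auxiliary quantities.

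Cases (1) and (4) are immediate: the branch $q\mid a,\ q\mid b$ of \cite{JKS2} reads $q^2\nmid b$, which is exactly $q^2\nmid d$, and the branch $q\nmid ab$ reads $q^2\nmid\Delta$, which by \eqref{Eq:Deltafandr} is $q^2\nmid\bigl(256d^3-27c^4\bigr)$. The work is concentrated in cases (2) and (3), where the general criterion introduces $q$-adic quantities built from the Frobenius lift $x\mapsto x^{q^{e}}$. In case (2) ($q\mid c$, $q\nmid d$) one uses $d_1=\tfrac{d+(-d)^{q^{j}}}{q}$ with $q^{j}\mid\mid n=4$, and in case (3) ($q\nmid c$, $q\mid d$) one uses $c_1=\tfrac{c+(-c)^{q^{\ell}}}{q}$ with $q^{\ell}\mid\mid(n-m)=3$, together with the reduced coefficients $c_2=c/q$ and $d_2=d/q$. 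I would first check these are genuine integers: by Fermat's little theorem $x^{q^{e}}\equiv x\pmod q$, so $d+(-d)^{q^{j}}\equiv 0$ and $c+(-c)^{q^{\ell}}\equiv0\pmod q$. Note also that the exponents are nontrivial only at $q=2$ (where $j=2$) and $q=3$ (where $\ell=1$), and that $d_1=0$, resp.\ $c_1=0$, for all other $q$, so these two small primes are the only genuinely special cases.

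The one substantive verification---and the place I expect the main (essentially bookkeeping) obstacle to lie---is confirming that the general resultant/discriminant-type expression of \cite{JKS2} collapses to the stated polynomials $dc_2^{4}+d_1^{4}$ in case (2) and $cc_1^{3}-d_2^{3}$ in case (3). Tracking the derivation, the exponent on $c_2$ and $d_1$ in case (2) is $n_1=4$ and the leading coefficient is $b=d$, while in case (3) the exponent on $c_1$ and $d_2$ is $n_1-m_1=3$, the leading coefficient is $a=c$, and the sign is negative; I would verify each by substituting $n_1=4,\ m_1=1$ into the general formulas and simplifying. As an independent safeguard I would re-derive cases (2) and (3) from scratch via Dedekind's criterion applied to $x^4+cx+d$ at $q=2$ and $q=3$ (factoring $\bar f$ over $\F_2$ and $\F_3$ and testing whether each repeated irreducible factor divides $(gh-f)/q$), since these are exactly the primes at which the Frobenius exponents $j,\ell$ are nonzero.
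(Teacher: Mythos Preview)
Your proposal is correct and follows exactly the paper's approach: the paper does not give an independent proof but simply states that the theorem ``follows from a result due to Jakhar, Khanduja and Sangwan \cite[Theorem 1.1]{JKS2} for arbitrary irreducible trinomials when applied to our specific quartic trinomial $f(x)$.'' Your plan to substitute $(n,m,a,b)=(4,1,c,d)$, note $\gcd(m,n)=1$ so $n_1=4$, $m_1=1$, and then transcribe each of the four branches is precisely that specialization, and your additional Dedekind-criterion cross-check at $q=2,3$ is a harmless extra safeguard not present in the paper.
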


The next theorem follows from a result due to Kappe and Warren \cite[Theorem 1]{KW} when applied to our specific quartic trinomial $f(x)$.
\begin{thm}\label{Thm:KW}
Let $f(x)$ and $r(x)$ be as defined in \eqref{Eq:fandr}.
Suppose that $f(x)$ is irreducible over $\Q$, and that $r(x)$ has exactly one root $t\in \Z$. Let $L$ be the splitting field of $r(x)$ over $\Q$, and define
 \begin{equation}\label{Eq:gr}
 g(x):=(x^2-tx+d)(x^2-t).
 \end{equation} Then $\Gal(f)\simeq$
 \begin{enumerate}
   \item $C_4$ if and only if $g(x)$ splits over $L$,
   \item $D_4$ if and only if $g(x)$ does not split over $L$.
 \end{enumerate}
 \end{thm}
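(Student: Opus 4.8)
The plan is to give a self-contained Galois-theoretic derivation, which is precisely the specialization to our trinomial of the general criterion of Kappe and Warren. Write $\alpha_1,\alpha_2,\alpha_3,\alpha_4$ for the roots of $f$ in a fixed algebraic closure, and regard $G=\Gal(f)$ as a transitive subgroup of $S_4$ via its action on these roots. First I would record, using Vieta applied to $f$ (the absence of the $x^3$ and $x^2$ terms gives $\sum_i\alpha_i=0$ and $\sum_{i<j}\alpha_i\alpha_j=0$, while $\alpha_1\alpha_2\alpha_3\alpha_4=d$), that the three quantities
\[
u_1=\alpha_1\alpha_2+\alpha_3\alpha_4,\quad u_2=\alpha_1\alpha_3+\alpha_2\alpha_4,\quad u_3=\alpha_1\alpha_4+\alpha_2\alpha_3
\]
are exactly the roots of the resolvent cubic $r(x)$ of \eqref{Eq:fandr}, which one checks by matching elementary symmetric functions against the coefficients. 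Since $f$ is separable and irreducible, $\Delta(r)=\Delta(f)\ne 0$ by \eqref{Eq:Deltafandr}, so $r$ is separable; and as a monic integer cubic has only integer rational roots, the hypothesis that $r$ has exactly one root $t\in\Z$ means $r$ factors over $\Q$ as $(x-t)$ times an irreducible quadratic. Thus the action of $G$ on $\{u_1,u_2,u_3\}$ fixes exactly the rational root and transposes the other two, which forces $G\cong C_4$ or $G\cong D_4$ (the only transitive subgroups of $S_4$ under which exactly one of $u_1,u_2,u_3$ is rational). I would also note $t\ne 0$, since $r(0)=-c^2\ne 0$.

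After relabeling so that $t=u_1$, the next step is to exhibit the roots of $g$ as symmetric functions of the $\alpha_i$. Setting $\beta=\alpha_1\alpha_2$ and $\gamma=\alpha_3\alpha_4$, one has $\beta+\gamma=u_1=t$ and $\beta\gamma=\alpha_1\alpha_2\alpha_3\alpha_4=d$, so $\beta,\gamma$ are the roots of $x^2-tx+d$. Setting $\delta=\alpha_1+\alpha_2$, the relation $\sum_i\alpha_i=0$ gives $\alpha_3+\alpha_4=-\delta$, while $(\alpha_1+\alpha_2)(\alpha_3+\alpha_4)=u_2+u_3=-u_1=-t$ yields $\delta^2=t$; hence $\delta,-\delta$ are the roots of $x^2-t$. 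Therefore the four roots of $g$ are $\beta,\gamma,\delta,-\delta$, and the assertion that $g$ splits over $L$ is equivalent to $\beta,\delta\in L$. I would further identify $L$: since $u_1=t\in\Q$, the field $L=\Q(u_2)=\Q(u_3)$ is the fixed field of the subgroup of $G$ that fixes $u_2$ (equivalently, that fixes both $u_2$ and $u_3$), and $[L:\Q]=2$.

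The heart of the argument is a comparison of fixed fields inside the splitting field of $f$, handled separately in the two cases. When $G\cong C_4=\langle\sigma\rangle$, I would choose the labeling so that $\sigma$ is the $4$-cycle fixing $u_1$, namely $\sigma=(1\,3\,2\,4)$; a direct check shows $\sigma$ interchanges $\beta\leftrightarrow\gamma$ and sends $\delta\mapsto-\delta$, so $\sigma^2$ fixes each of $\beta,\gamma,\delta$. Thus $\beta$ and $\delta$ both lie in the fixed field of $\langle\sigma^2\rangle$; but $C_4$ has a unique subgroup of index $2$, so this fixed field is the unique quadratic subextension and hence equals $L$. Therefore $\beta,\delta\in L$ and $g$ splits over $L$. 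When $G\cong D_4=\langle\sigma,\tau\rangle$ with $\sigma=(1\,3\,2\,4)$ and $\tau=(1\,2)$, I would compute the three index-$2$ subgroups and observe that the subgroup fixing $u_2$ is the normal Klein four group $\{e,(1\,2)(3\,4),(1\,3)(2\,4),(1\,4)(2\,3)\}$, so $L$ is its fixed field, whereas the stabilizer of $\beta$ is the distinct Klein four group $\{e,(1\,2)(3\,4),(1\,2),(3\,4)\}$. Since these two index-$2$ subgroups differ, $\Q(\beta)\ne L$, and as both are quadratic over $\Q$ this forces $\beta\notin L$; hence $x^2-tx+d$, and therefore $g$, fails to split over $L$. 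Combining these two implications with the dichotomy $G\in\{C_4,D_4\}$ yields both biconditionals.

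The main obstacle I anticipate is the bookkeeping in this last step: one must pin down exactly which quadratic subfield of the degree-$8$ splitting field is $L$ and verify that, in the $D_4$ case, it is genuinely different from $\Q(\beta)$ (and $\Q(\delta)$), while in the $C_4$ case the uniqueness of the index-$2$ subgroup makes them coincide automatically. Keeping the actions of the chosen generators on $u_1,u_2,u_3$ and on $\beta,\gamma,\delta$ consistent under one fixed labeling of the roots is where the care is needed; the remainder is routine Vieta computation and the Galois correspondence.
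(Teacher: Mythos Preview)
Your argument is correct and is exactly the Kappe--Warren proof specialized to $f(x)=x^4+cx+d$: the identification of the roots of $g$ with $\alpha_1\alpha_2,\ \alpha_3\alpha_4,\ \pm(\alpha_1+\alpha_2)$ via Vieta, followed by the fixed-field comparison (uniqueness of the index-$2$ subgroup in $C_4$ versus the two distinct Klein four subgroups in $D_4$) is precisely their method. The paper itself does not supply a proof of this statement at all---it simply records it as a specialization of \cite[Theorem~1]{KW}---so your self-contained derivation fills in what the paper leaves to the reference; the only small point worth making explicit is that $\beta\ne\gamma$ (equivalently $t^2\ne 4d$), which follows from $t(t^2-4d)=c^2\ne 0$ and is needed to ensure the stabilizer of $\beta$ in the $D_4$ case is exactly the non-normal Klein four group rather than all of $G$.
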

 
\section{The Proof of Theorem \ref{Thm:Main}}\label{Section:MainProof}
We first prove two lemmas.
\begin{lemma}\label{Lem:Quadratic fields}
  Let $A,B\in \Z$ such that neither $A$ nor $B$ is a square. Then $AB$ is a square if and only if $\Q(\sqrt{A})=\Q(\sqrt{B})$.
\end{lemma}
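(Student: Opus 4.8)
The plan is to prove both implications directly, using only that a quadratic field is a two-dimensional $\Q$-vector space.

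For the forward implication, suppose $AB$ is a square, say $AB=m^2$ with $m\in\Z$. First I would note that $A$ and $B$ must have the same sign, since otherwise $AB<0$ cannot be a square; this lets me view $\Q(\sqrt A)$ and $\Q(\sqrt B)$ inside $\C$ compatibly, and in either sign case one has $\sqrt A\,\sqrt B=\pm m$. Dividing by $\sqrt A$ gives $\sqrt B=\pm m/\sqrt A=\pm(m/A)\sqrt A\in\Q(\sqrt A)$, so $\Q(\sqrt B)\subseteq\Q(\sqrt A)$. Because neither $A$ nor $B$ is a square, both fields are genuine quadratic extensions of $\Q$, hence of equal degree over $\Q$, and the inclusion forces $\Q(\sqrt A)=\Q(\sqrt B)$.

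For the reverse implication, suppose $\Q(\sqrt A)=\Q(\sqrt B)$. Then $\sqrt B=u+v\sqrt A$ for some $u,v\in\Q$. Squaring yields $B=u^2+v^2A+2uv\sqrt A$, and since $\sqrt A\notin\Q$ (as $A$ is not a square), comparing rational and irrational parts gives $2uv=0$. If $v=0$ then $\sqrt B=u\in\Q$, contradicting that $B$ is not a square; hence $u=0$ and $B=v^2A$, so $AB=v^2A^2=(vA)^2$. Finally, $AB\in\Z$ is the square of the rational number $vA$, and a rational number whose square is an integer is itself an integer, so $AB$ is a perfect square in $\Z$.

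The whole argument is routine linear algebra over $\Q$. The only places that call for a little care are the sign bookkeeping in the forward direction (so that $\sqrt A\,\sqrt B=\pm\sqrt{AB}$ is legitimate when $A,B<0$) and the closing remark that a rational square which lies in $\Z$ must be an integer square; neither is a genuine obstacle.
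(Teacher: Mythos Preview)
Your proof is correct. The paper takes a slightly different route: it writes $A=u^2a$ and $B=v^2b$ with $a,b$ squarefree and $a,b\ne 1$, and then invokes the standard fact that distinct squarefree integers (other than $1$) generate distinct quadratic fields, so that $AB$ is a square $\iff a=b \iff \Q(\sqrt{A})=\Q(\sqrt{a})=\Q(\sqrt{b})=\Q(\sqrt{B})$. Your argument instead works directly at the element level, expressing $\sqrt{B}$ in the basis $\{1,\sqrt{A}\}$ and comparing coefficients. The paper's version is terser but leans on the squarefree-part classification of quadratic fields as a black box; your version is a line or two longer but is entirely self-contained and makes the equality of fields explicit without appealing to that classification. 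Either approach is perfectly adequate for such an elementary lemma.
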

\begin{proof}
  Since $A$ and $B$ are not squares, we can write
  \[A=u^2a \quad \mbox{and} \quad B=v^2b,\]
  where $a$ and $b$ are squarefree integers with $a,b\ne 1$.
   Then $AB$ is a square if and only if $a=b$ if and only if \[\Q(\sqrt{A})=\Q(\sqrt{a})=\Q(\sqrt{b})=\Q(\sqrt{B}).\qedhere\]
\end{proof}
The next lemma is a more user-friendly version of Theorem \ref{Thm:KW}.
\begin{lemma}\label{Lem:KW}
 Let $f(x)$ and $r(x)$ be as defined in \eqref{Eq:fandr}. Suppose that $f(x)$ is irreducible over $\Q$, and that $r(x)$ has exactly one root $t\in \Z$.
 Define
 \[\delta_1:=t(16d-3t^2) \quad \mbox{and}\quad \delta_2:=(t^2-4d)(16d-3t^2).\]
   Then $\Gal(f)\simeq$
 \begin{enumerate}
   \item $C_4$ if and only if $\delta_1$ and $\delta_2$ are nonzero squares in $\Z$,
   \item $D_4$ if and only if neither $\delta_1$ nor $\delta_2$ is a nonzero square in $\Z$.
 \end{enumerate}
\end{lemma}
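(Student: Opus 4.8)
The plan is to derive Lemma~\ref{Lem:KW} from Theorem~\ref{Thm:KW} together with Lemma~\ref{Lem:Quadratic fields}, after recording a few arithmetic consequences of the hypotheses. First, since $r(t)=0$ we have $c^2=t(t^2-4d)$, and because $cd\ne 0$ this immediately forces $t\ne 0$ and $t^2-4d\ne 0$. Factoring $r(x)=(x-t)\bigl(x^2+tx+(t^2-4d)\bigr)$, the assumption that $t$ is the \emph{only} integer root of $r$ forces the quadratic factor to be irreducible over $\Q$ (a rational root of a monic integer polynomial is an integer), so its discriminant $16d-3t^2$ is not a square; moreover $16d-3t^2\ne 0$, since a vanishing discriminant would make the quadratic factor a perfect square and hence contribute a second integer root of $r$. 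In particular $L=\Q\bigl(\sqrt{16d-3t^2}\bigr)$ is a genuine quadratic field, and $\delta_1,\delta_2\ne 0$.

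Next I would show that neither $t$ nor $t^2-4d$ is a perfect square. If $t=s^2$, then $s^2\mid c^2$ gives $s\mid c$, and $c^2=t(t^2-4d)$ rearranges to $t^2-4d=(c/s)^2$, also a square; then the two rational numbers $\beta,\gamma:=\bigl(t\pm c/s\bigr)/2$ are the roots of the monic polynomial $z^2-tz+d\in\Z[z]$, hence are integers, and a direct expansion verifies $f(x)=(x^2+sx+\beta)(x^2-sx+\gamma)$, contradicting the irreducibility of $f$. Using $c^2=t(t^2-4d)$ once more: if $t^2-4d=w^2$ (necessarily with $w\ne 0$), then $w\mid c$ and $t=(c/w)^2$ would be a square, which we have just excluded. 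Thus $t$, $t^2-4d$ and $16d-3t^2$ are all nonzero and not squares.

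With this in hand the proof is a translation. By Theorem~\ref{Thm:KW}, $\Gal(f)\simeq C_4$ precisely when $g(x)=(x^2-tx+d)(x^2-t)$ splits over $L$, that is, when both $\sqrt{t}\in L$ and $\sqrt{t^2-4d}\in L$. Since $L$ is quadratic and $t$, $t^2-4d$ are not squares, $\sqrt{t}\in L$ is equivalent to $\Q(\sqrt{t})=L$, which by Lemma~\ref{Lem:Quadratic fields} is equivalent to $\delta_1=t(16d-3t^2)$ being a square, and likewise $\sqrt{t^2-4d}\in L$ is equivalent to $\delta_2$ being a square. Finally, the relation $c^2=t(t^2-4d)$ with $c\in\Q^{\times}$ shows that $\sqrt{t}\in L$ if and only if $\sqrt{t^2-4d}\in L$, since in $L$ the product of $\sqrt{t}$ and $\sqrt{t^2-4d}$ is $\pm c\in\Q^{\times}$; hence $\delta_1$ is a nonzero square exactly when $\delta_2$ is. Therefore $g$ splits over $L$ if and only if $\delta_1$ and $\delta_2$ are both nonzero squares, giving part~(1); and by the same theorem $\Gal(f)\simeq D_4$ if and only if $g$ does not split over $L$, i.e.\ if and only if neither $\delta_1$ nor $\delta_2$ is a nonzero square, giving part~(2).

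I expect the one step requiring genuine care to be the exclusion of $t$ being a perfect square — specifically, verifying that the factorization of $f$ this would force has integer, rather than merely rational, coefficients; everything else is a short passage through Lemma~\ref{Lem:Quadratic fields} and the identity $c^2=t(t^2-4d)$.
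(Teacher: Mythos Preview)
Your proof is correct. The preliminary steps---factoring $r$, identifying $L=\Q(\sqrt{16d-3t^2})$, recording $c^2=t(t^2-4d)$, and ruling out $t$ (hence $t^2-4d$) being a square via an explicit factorization of $f$---match the paper's. The divergence comes at the end. You use Theorem~\ref{Thm:KW} as a black box: $\Gal(f)\simeq C_4$ iff $g$ splits over $L$, and you translate ``$g$ splits over $L$'' into ``$\sqrt{t}\in L$ and $\sqrt{t^2-4d}\in L$,'' then into ``$\delta_1,\delta_2$ are squares'' via Lemma~\ref{Lem:Quadratic fields}, with the relation $\sqrt{t}\cdot\sqrt{t^2-4d}=\pm c$ forcing these two conditions to be equivalent. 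The paper instead reaches back into the \emph{proof} of Kappe--Warren: it lists the quadratic subfields $\Q(\sqrt{t})$, $\Q(\sqrt{t^2-4d})$, $\Q(\sqrt{\delta_1})$, $\Q(\sqrt{\delta_2})$ of the splitting field, and argues that when neither $\delta_i$ is a square there are at least two distinct quadratic subfields (forcing $D_4$), while if both are squares one shows $K=L(\alpha_1)$ has degree $4$ by an explicit computation with the roots $\alpha_i$. Your route is shorter and stays entirely within the \emph{statement} of Theorem~\ref{Thm:KW}; the paper's route is more hands-on and exposes the subfield lattice, at the cost of needing facts not contained in Theorem~\ref{Thm:KW} as stated.
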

\begin{proof}
 From Theorem \ref{Thm:KW}, we have that
\begin{equation}\label{Eq:r2}
r(x)=(x-t)(x^2+Ax+B)=x^3+(A-t)x^2+(B-tA)x-tB,
\end{equation} for some $t,A,B\in \Z$, where $x^2+Ax+B$ is irreducible over $\Q$. Equating coefficients on $r(x)$ in \eqref{Eq:fandr} and \eqref{Eq:r2} yields
\begin{equation}\label{Eq:r3}
 r(x)=(x-t)(x^2+tx+t^2-4d).
\end{equation} Then, calculating $\Delta(r)$  in  \eqref{Eq:r3}, and recalling $\Delta(r)$ from \eqref{Eq:Deltafandr}, gives us 
\begin{equation}\label{Eq:Delta(f)}
\Delta(f)=\Delta(r)=256d^3-27c^4=(16d-3t^2)(3t^2-4d)^2.
\end{equation} We also see from \eqref{Eq:r3} that $16d-3t^2$ is not a square in $\Z$ since $x^2+tx+t^2-4d$ is irreducible over $\Q$, so that
\[L:=\Q(\sqrt{16d-3t^2}) \ \mbox{is the splitting field of $r(x)$ over $\Q$}\] with $[L:\Q]=2$.
Furthermore, from \eqref{Eq:fandr}, we have that
\begin{equation}\label{Eq:r(t)=0}
t(t^2-4d)=c^2
\end{equation}
since $r(t)=0$. Thus, we deduce from \eqref{Eq:r(t)=0} that exactly one of the following sets of conditions holds:
\begin{align}
&\label{C1} \mbox{$t$ and $t^2-4d$ are both squares in $\Z$},\\
&\label{C2} \mbox{neither $t$ nor $t^2-4d$ is a square in $\Z$}.
\end{align}
Note that $t(t^2-4d)\ne 0$ since $c\ne 0$.
Suppose that \eqref{C1} holds, and let $2^{2k}\mid\mid t$. Then, from \eqref{Eq:r(t)=0}, if $k=0$, then $2\nmid c$ and, since $\sqrt{t}\mid c$, we have that $t\pm c/\sqrt{t}$ are even integers. If $k\ge 1$, then $2^k\mid\mid \sqrt{t}$ and $2^{k+1}\mid\mid c$. Hence, again $t\pm c/\sqrt{t}$ are even integers. Thus, in any case, we deduce that
$(t^2\pm c\sqrt{t})/(2t)\in \Z$, so that
\begin{align*}
x^4+cx+d&=x^4+cx+\frac{t^3-c^2}{4t}\\
&=\left(x^2+\sqrt{t}x+\frac{t^2-c\sqrt{t}}{2t}\right)\left(x^2+\sqrt{t}x+\frac{t^2+c\sqrt{t}}{2t}\right)
\end{align*}
is a factorization of $f(x)$ in $\Z[x]$, contradicting the fact that $f(x)$ is irreducible over $\Q$. Hence, \eqref{C2} holds.
Then, from \eqref{Eq:r(t)=0}, we see that
\begin{equation}\label{Eq:delta1delta2}
\delta_1\delta_2=t(16d-3t^2)(t^2-4d)(16d-3t^2)=c^2(16d-3t^2)^2
\end{equation} is a nonzero square in $\Z$. Therefore, exactly one of the following sets of conditions holds:
\begin{align}
&\label{C3} \mbox{$\delta_1$ and $\delta_2$ are both squares in $\Z$},\\
&\label{C4} \mbox{neither $\delta_1$ nor $\delta_2$ is a square in $\Z$}.
\end{align}

Let $K$ be the splitting field of $f(x)$. Then, it follows from the details of the proof of \cite[Theorem 1]{KW} that the following quadratic fields are subfields of $K$:
\[\begin{array}{lcl}
L=\Q(\sqrt{16d-3t^2}), &\quad & M_3=\Q(\sqrt{t(16d-3t^2)})=\Q(\sqrt{\delta_1})\\[.4em]
M_1=\Q(\sqrt{t}), & & M_4=\Q(\sqrt{(t^2-4d)(16d-3t^2)})=\Q(\sqrt{\delta_2}),\\[.4em]
M_2=\Q(\sqrt{t^2-4d}). &&
\end{array}\]
 Thus, by \eqref{Eq:r(t)=0}, \eqref{C2} and Lemma \ref{Lem:Quadratic fields}, we deduce that $M_1=M_2$. 

 Suppose that \eqref{C4} holds. Then, since $16d-3t^2$ is not a square, it follows that $M_3=M_4$ and $M_1\ne L$ by \eqref{C2}, \eqref{Eq:delta1delta2} and Lemma \ref{Lem:Quadratic fields}. Hence, $K$ contains more than a single quadratic subfield, which implies that $\Gal(f)\simeq D_4$.

 Conversely, suppose that $\Gal(f)\simeq D_4$ and, by way of contradiction, assume that \eqref{C3} holds. We let $\alpha_1,\alpha_2,\alpha_3,\alpha_4$ be the roots of $f(x)$ and follow the proof of \cite[Theorem 1.]{KW}. Then, the quadratic subfield $L$ of $K$ contains:
 \begin{align*}
 \alpha_1\alpha_2+\alpha_3\alpha_4, \ \alpha_1\alpha_3+\alpha_2\alpha_4, \ \alpha_1\alpha_4+\alpha_2\alpha_3& \quad \mbox{(the roots of $r(x)$ in \eqref{Eq:r3})},\\
 \alpha_1\alpha_2, \ \alpha_3\alpha_4, \ \alpha_1+\alpha_2, \ \alpha_3+\alpha_4 & \quad \mbox{(the roots of $g(x)$ in \eqref{Eq:gr}).}
 \end{align*}
 \noindent
 Consider the polynomial
 \[h(x):=x^2-(\alpha_1+\alpha_2)x+\alpha_1\alpha_2\in L[x].\] Observe that the roots of $h(x)$ are $\alpha_1$ and $\alpha_2$. Since $\alpha_1\ne \alpha_2$ and
    \[\alpha_3-\alpha_4=\frac{(\alpha_1\alpha_3+\alpha_2\alpha_4)-(\alpha_1\alpha_4+\alpha_2\alpha_3)}{\alpha_1-\alpha_2}\in L(\alpha_1),\]
  it follows that $\alpha_3, \alpha_4\in L(\alpha_1)$ so that $K=L(\alpha_1)$, which yields the contradiction that $\abs{\Gal(f)}=4$, and completes the proof of the lemma.
 \end{proof}
\begin{proof}[Proof of Theorem \ref{Thm:Main}]
By way of contradiction, suppose that $f(x)$ is a monogenic cyclic trinomial.
Since $f(x)$ is cyclic, we must have $\Delta(f)>0$, which implies that
\begin{equation}\label{Eq:Positive}
d>0 \quad \mbox{and} \quad 16d-3t^2>0,
\end{equation}
from \eqref{Eq:Delta(f)}. Furthermore, we have from Lemma \ref{Lem:KW} that
$\delta_1$ and $\delta_2$ are nonzero squares in $\Z$. Hence, if $t<0$, then 
$\delta_1=t(16d-3t^2)<0$, contradicting the fact that $\delta_1$ is a square. Thus, $t>0$ and $t^2-4d>0$ by \eqref{Eq:r(t)=0}. Therefore, it follows that
\begin{equation*}\label{Eq:Negative}
(3t^2-4d)-(16d-3t^2)=6t^2-20d>6t^2-24d=6(t^2-4d)>0,
\end{equation*}
which implies that
\begin{equation}\label{Eq:Size}
3t^2-4d>16d-3t^2>0
\end{equation} from \eqref{Eq:Positive}.
We make the following additional observations:
\begin{align}\label{Obs}
\begin{split} 
& 2\mid \Delta(f)  \ \Longleftrightarrow \ 4\mid (3t^2-4d) \ \Longleftrightarrow  \ 4\mid (16d-3t^2) \Longleftrightarrow \ 2\mid t \\
& 3\mid \Delta(f) \ \Longleftrightarrow \ 3\mid (3t^2-4d) \ \Longleftrightarrow  \ 3\mid (16d-3t^2) \Longleftrightarrow \ 3\mid d.
\end{split}
 \end{align}

In our arguments, it will be useful to know the solutions to the equation
\begin{equation}\label{Eq:2^k}
3t^2-4d=2^{k-1},
 \end{equation} where $k$ is an integer with $k\ge 1$.
Solving \eqref{Eq:2^k} for $4d$, and using \eqref{Eq:r(t)=0}, yields the equation
\begin{equation}\label{Eq:PreElliptic}
c^2=-2t^3+2^{k-1}t.
\end{equation}
Then, multiplying both sides of \eqref{Eq:PreElliptic} by $4$ gives rise to the elliptic curve:
\begin{align}\label{E}
\begin{split}
E_k: & \quad Y^2=X^3-2^kX,\\
\mbox{where } & \mbox{$X=-2t$ and $Y=2c$.}
\end{split}
\end{align}
Note that the point $(0,0)$ is on $E_k$. However, since $c\ne 0$, we must have $Y\ne 0$.
Thus, it follows from \cite{Dra,Walsh} that the set $S$ of integral points $(X,Y)$ on $E_k$ with $Y>0$ is
\begin{equation}\label{Eq:S}
  S=\left\{\begin{array}{cl}
  \varnothing & \mbox{if $\overline{k}\in \{0,2\}$}\\[.5em] 
  \{(-1,1), (2,2), (2\cdot 13^2,2\cdot 13\cdot 239)\}& \mbox{if $k=1$}\\[.5em]
  \{(-2^{(k-1)/2},2^{3(k-1)/4}),(2^{(k+1)/2},2^{(3k+1)/4}), &\\[.5em]
  (2^{(k+1)/2}\cdot 13^2,2^{(3k+1)/4}\cdot 13\cdot 239),&\mbox{if $k\ge 5$ with $\overline{k}=1$}\\[.5em]
  (2^{(k-5)/2}\cdot 3^2,2^{3(k-5)/4}\cdot 3 \cdot 7)\}& \\[.5em] 
  \varnothing & \mbox{if $\overline{k}=3$,}
    \end{array}\right.
\end{equation}
where $\overline{k}:=k\mmod{4}\in \{0,1,2,3\}$. Therefore, every viable integer coefficient pair $(c,d)$ for $f(x)$ arises from an integer solution $(d,t,k)$ of \eqref{Eq:2^k}, which in turn, arises from an integral point $(X,Y)$ in \eqref{Eq:S}. Note, however, that an integral point $(X,Y)$ in \eqref{Eq:S} does not always yield an integer solution $(d,t,k)$ of \eqref{Eq:2^k}, or a viable integer coefficient pair $(c,d)$ for $f(x)$. For example,
 if $k=1$, then we deduce from \eqref{Eq:S} that any integer solutions $(d,t)$ to \eqref{Eq:2^k} must arise from the integral points
 \begin{equation}\label{Eq:k=1}
 (X,Y)\in \{(2,2), (2\cdot 13^2,2\cdot 13\cdot 239)\}
 \end{equation}
 on $E_k$. That is, the integer pairs $(c,t)$ corresponding to the integral points $(X,Y)$ in \eqref{Eq:k=1} are precisely
 \[(c,t)\in \{(\pm 1,-1),(\pm 13\cdot 239,-13^2),\] which in turn, correspond precisely to the coefficient pairs 
 \begin{equation}\label{Eq:coeffk=1}
 (c,d)\in \{(\pm 1,1/2), (\pm 13\cdot 239,42841/2).
  \end{equation} Since $d\not \in \Z$ in all pairs of \eqref{Eq:coeffk=1},  it follows that there are no monogenic cyclic quartic trinomials $f(x)\in \Z[x]$ when $k=1$ in \eqref{Eq:2^k}.
  Therefore, we can assume that $3t^2-4d>1$. Moreover, we claim that
  \begin{equation}\label{Eq:Claim1}
    3t^2-4d\ne 2^{k-1} \ \mbox{for any integer $k\ge 1$.}
  \end{equation}
  Since the case $k=1$ has been addressed, in order to establish \eqref{Eq:Claim1}, we assume that $3t^2-4d=2^{k-1}$ for some integer $k\ge 2$, and we proceed by way of contradiction.
Observe then that $2\mid t$ which implies that $8\mid t(t^2-4d)$. It follows from  \eqref{Eq:r(t)=0} that $16\mid t(t^2-4d)$ and $4\mid c$.  Consequently, we must address the following three cases:
\begin{enumerate}
  \item \label{Case 1} $4\mid t$ and $2\mid d$,
  \item \label{Case 2} $4\mid t$ and $2\nmid d$,
  \item \label{Case 3} $2\mid \mid  t$ and $2\nmid d$.
\end{enumerate}
Note that if $2\mid \mid  t$ and $2\mid d$, then $2^3\mid \mid t(t^2-4d)$, which contradicts \eqref{Eq:r(t)=0}, and so this situation is not possible.

For case \eqref{Case 1}, if $4\mid d$, then condition \eqref{JKS:I1} of Theorem \ref{Thm:JKS} fails for the prime $q=2$. Hence, $2\mid \mid d$ and
$k=4$. It then follows from \eqref{Eq:S} that there are no cyclic monogenic trinomials $f(x)$ in this case.

Suppose next that case \eqref{Case 2} holds. It is then easy to see that $k=3$, and again we conclude from \eqref{Eq:S} that there are no cyclic monogenic trinomials $f(x)$ in this case as well.

Finally, suppose that case \eqref{Case 3} holds. Then $2^2\mid \mid (-2t)$, and therefore, from \eqref{Eq:S}, we get the following viable integral points on the elliptic curves in \eqref{E}:
\begin{equation*}\label{Eq:Viable}
(X,Y)=\left\{\begin{array}{cl}
    (-4,\pm 8) & \mbox{if $k=5$}\\[.5em] 
  (36,\pm 168) & \mbox{if $k=9$}, 
\end{array}\right.
\end{equation*}
which in turn, using \eqref{Eq:r(t)=0}, yield the corresponding integer triples
\begin{equation}\label{Eq:triples}
 (t,c^2,d)=\left\{\begin{array}{cl}
  (2,16,-1) & \mbox{if $k=5$}\\[.5em]
(-18,7056,179) & \mbox{if $k=9$.} 
\end{array}\right.
\end{equation} Since we have shown that both $t$ and $d$ must be positive, we conclude that neither of the triples in \eqref{Eq:triples} produces a cyclic monogenic trinomial $f(x)$, which establishes the claim in \eqref{Eq:Claim1}.
Consequently, we can assume that $q\mid (3t^2-4d)$ for some prime $q\ge 3$.

Next, we claim that for any prime $q$,
\begin{equation}\label{Eq:Claim2}
 q\mid (3t^2-4d) \quad \mbox{if and only if} \quad  q\mid (16d-3t^2).
\end{equation}
Note by \eqref{Obs} that we only need to address primes $q\ge 5$.

Suppose first that $q\ge 5$ is a prime such that $q\mid (3t^2-4d)$ and $q\nmid (16d-3t^2)$. Since $3t^2\equiv 4d \pmod{q}$, we have that
\[16d-3t^2\equiv 12d\not \equiv 0 \pmod{q},\] which implies that $q\nmid d$. If $q\mid c$, then we see from \eqref{Eq:Delta(f)} that  $q\mid 256d^3$, since $q\mid \Delta(f)$. Thus, since $q\ne 2$, we deduce that $q\mid d$, which is a contradiction. Hence, $q\nmid c$. Therefore, we have that $q\nmid cd$. However, since $q\mid (3t^2-4d)$, we conclude from \eqref{Eq:Delta(f)} that $q^2\mid (256d^3-27c^4)$, which implies that condition \eqref{JKS:I4} of Theorem \ref{Thm:JKS} fails. Hence, $f(x)$ is not monogenic.

Suppose next that $q\ge 5$ is a prime such that $q\mid (16d-3t^2)$ and $q\nmid (3t^2-4d)$. Since $3t^2\equiv 16d \pmod{q}$, we have that
\[3t^2-4d\equiv 12d\not \equiv 0 \pmod{q},\] which implies that $q\nmid d$. If $q\mid t$, then 
\[16d=(16d-3t^2)+3t^2\equiv 0 \pmod{q},\] which yields the contradiction that $q\mid d$ since $q\ge 5$. Similarly, if $q\mid (t^2-4d)$, then 
\[4d=(16d-3t^2)+3(t^2-4d)\equiv 0 \pmod{q},\] which again yields the contradiction that $q\mid d$. Thus, $q\nmid t$ and $q\nmid(t^2-4d)$ so that 
$q\nmid c$ by \eqref{Eq:r(t)=0}. Hence, $q^2\nmid (16d-3t^2)$ since otherwise, condition \eqref{JKS:I4} of Theorem \ref{Thm:JKS} fails. That is, $q\mid\mid (16d-3t^2)$, which implies that $q\mid \mid t(16d-3t^2)$ contradicting the fact that $t(16d-3t^2)$ is a square. Therefore, the claim in \eqref{Eq:Claim2} is established.

Suppose now that $q$ is a prime divisor of $3t^2-4d$, and consequently of $16d-3t^2$ by \eqref{Eq:Claim2}. We claim that 
\begin{equation}\label{Eq:Claim3}
 \mbox{if $q\ge 5$ then $q\mid \mid (3t^2-4d)$ and $q\mid \mid (16d-3t^2)$.}
\end{equation} 
 We only give details to show that $q\mid \mid (3t^2-4d)$ since the details are identical to show that $q\mid \mid (16d-3t^2)$. By way of contradiction, suppose that $q^2\mid(3t^2-4d)$. If $q\mid d$, then, since $q\ne 3$, we see that $q\mid t$, and so $q\mid c$ from \eqref{Eq:r(t)=0}. Thus, $q^2\mid 4d$, so that $q^2\mid d$ since $q\ne 2$. Hence, it follows that $f(x)$ is not monogenic since condition \eqref{JKS:I1} of Theorem \ref{Thm:JKS} fails. Therefore, $q\nmid d$. Note then from \eqref{Eq:Delta(f)}, that $q\nmid c$ since $q\mid \Delta(f)$ and $q\ne 2$. That is, $q\nmid cd$. Consequently, since $q^2\mid \Delta(f)$, it follows that $f(x)$ is not monogenic since condition \eqref{JKS:I4} of Theorem \ref{Thm:JKS} fails, and this final contradiction completes the proof of the claim in \eqref{Eq:Claim3}.

Thus, we can therefore assume that
\begin{equation}\label{Eq:Factors}
3t^2-4d=2^a3^b\prod_{i=1}^{m}q_i \quad \mbox{and} \quad 16d-3t^2=2^u3^v\prod_{i=1}^{m}q_i
\end{equation}
for some nonnegative integers $a,b,u,v,m$, such that all of the following conditions hold:
\begin{enumerate}[label=(\roman*)]
\item \label{Ii} $a+b>0$ and $u+v>0$,\\
 (since otherwise $4d-3t^2=16d-3t^2$, which is impossible because $d>0$),
  \item \label{Iii} $a>0$ if and only if $u>0$,
  \item \label{Iiii} $b>0$ if and only if $v>0$,
  \item \label{Iiv} $2^a3^b>2^u3^v$ (by \eqref{Eq:Size}),
  \item \label{Iv} the $q_i$ are primes with $q_i\ge 5$.
  \end{enumerate} 
 We claim that
 \begin{equation}\label{Eq:Claim4}
   \mbox{$b\ne v$ in \eqref{Eq:Factors}.}
 \end{equation}
  To establish \eqref{Eq:Claim4}, we assume, by way of contradiction, that $b=v$ in \eqref{Eq:Factors}. 
  Thus, we have from \eqref{Eq:Factors} that
  \begin{equation}\label{Eq:1}
  \frac{3t^2-4d}{16d-3t^2}=2^{a-u}\in \Z,
  \end{equation} since $2^a>2^u$ by \ref{Iiv}. Note then that $u\ge 2$ by \eqref{Obs}. Since
  \begin{equation*}\label{Eq:Main}
  \frac{3t^2-4d}{16d-3t^2}=\frac{3t^2-16d+12d}{16d-3t^2}=-1+\frac{12d}{16d-3t^2},
  \end{equation*}
  we can rewrite the equation in \eqref{Eq:1} as
  \begin{equation*}
    (2^{a-u}+1)(16d-3t^2)=12d,
  \end{equation*}
  which implies that $u=\nu_2(d)+2$, where $\nu_2(*)$ is the exact power of 2 dividing $*$. Since the equation in \eqref{Eq:1} can also be rewritten as
  \begin{equation*}
    3t^2(2^{a-u}+1)=4d(2^{a-u+2}+1),
  \end{equation*} it follows that $\nu_2(t^2)=\nu_2(d)+2=u$, so that $2\mid u$. Therefore, we can write
  \[t^2=2^ut_1^2 \quad \mbox{and}\quad d=2^{u-2}d_1,\]
  where $2\nmid t_1d_1$. Thus, 
  \begin{equation}\label{Eq:2}
  \frac{3t^2-4d}{16d-3t^2}=\frac{2^u3t_1^2-2^ud_1}{2^{u+2}d_1-2^u3t_1^2}=\frac{3t_1^2-d_1}{4d_1-3t_1^2}=2^{a-u}.
   \end{equation} If $4\mid (3t_1^2-d_1)$, then $d\equiv 3 \pmod{3}$, so that $t_1^2-d_1\equiv 2 \pmod{4}$. Then,
  \[t(t^2-4d)=2^{u/2}t_1(2^ut_1^2-2^ud_1)=2^{(3u+2)/2}t_1\left(\frac{t_1^2-d_1}{2}\right)\in \Z,\] and we deduce from \eqref{Eq:r(t)=0} that $2\mid (3u+2)/2$. Consequently, $u\equiv 2 \pmod{4}$ so that $2\nmid 3u/2$. However, we then arrive at the contradiction that
  \[\delta_1=t(16d-3t^2)=2^{u/2}t_1(2^{u+2}d_1-2^u3t_1^2)=2^{3u/2}t_1(4d_1-3t_1^2)\] is not a square since $2^{3u/2}\mid \mid \delta_1$. Therefore, $4\nmid (3t_1^2-d_1)$, and we conclude from \eqref{Eq:2} that $a-u=1$ in \eqref{Eq:1}. Thus,
  \[3t^2-4d-2(16d-3t^2)=9(t^2-4d)=0,\] which, from \eqref{Eq:r(t)=0}, yields the contradiction that $c=0$ and establishes the claim in \eqref{Eq:Claim4}.

  Hence, since $b\ne v$, we may assume by \ref{Iiii} that either $b>v>0$ or $v>b>0$, which implies that either $b\ge 2$ or $v\ge 2$. Thus, $3\mid \Delta(f)$ and $3\mid d$ by \eqref{Obs}. We use Theorem \ref{Thm:JKS} with $q=3$ to show that  $3\mid \left[\Z_K:\Z[\theta]\right]$, where $K=\Q(\theta)$ and $f(\theta)=0$. Since $3\mid d$, we only have to examine conditions \eqref{JKS:I1} and \eqref{JKS:I3} of Theorem \ref{Thm:JKS}.

  Suppose first that $3\mid c$. Then $3\mid t$ from \eqref{Eq:r(t)=0} so that $9\mid t^2$. Hence, if $b\ge 2$, then $9\mid (3t^2-4d)$, which implies that $9\mid d$. Similarly, if $v\ge 2$, then $9\mid (16d-3t^2)$, which implies that $9\mid d$. In either case, condition \eqref{JKS:I1} of Theorem \ref{Thm:JKS} fails and $f(x)$ is not monogenic.

  Suppose next that $3\nmid c$. If $9\mid d$ and $b\ge 2$, then
  \[3t^2-4d\equiv 3t^2\equiv 0 \pmod{9},\] which implies that $3\mid t$, yielding the contradiction that $3\mid c$ by \eqref{Eq:r(t)=0}. The same contradiction is reached if $v\ge 2$. Thus, $9\nmid d$.  
  With these restrictions on $c$ and $d$, we arrive at the set of pairs 
  \[P:=\{(c \mmod{9},d \mmod{9})\}=\{(2,6),(4,3),(5,3),(7,6)\}\]
  for which $t(t^2-4d)\equiv c^2\pmod{9}$ (from \eqref{Eq:r(t)=0}). With $c_1=(c-c^3)/3$ and $d_2=d/3$, it is easy to see that
     \[cc_1^3-d_2^3\equiv 0 \pmod{3} \quad \mbox{if and only if} \quad c^4-c^2+d\equiv 0 \pmod{9}.\] Straightforward calculations reveal that $9\nmid 3c_1$ and $c^4-c^2+d \equiv 0 \pmod{9}$ for each of the pairs in $P$. Hence, condition \eqref{JKS:I3} of Theorem \ref{Thm:JKS} fails, and $f(x)$ is not monogenic, which completes the proof of the theorem.
\end{proof}





\end{document}